\setlist[enumerate]{label={(\arabic*)}}
\newtheorem{thm}{Theorem}[section]
\newtheorem{lem}[thm]{Lemma}
\newtheorem{conj}[thm]{Conjecture}
\providecommand{\keywords}[1]{\textbf{\textit{Keywords.}} #1}
\def\W{\mathcal{W}}
\def\FF{\mathbb{F}}
\def\Cay{\operatorname{Cay}}
\title{Examples of diameter-2 graphs with no triangle or $K_{2,t}$}
\author{
    Sean Eberhard\thanks{Mathematics Institute, Zeeman Building, University of Warwick, Coventry~CV4~7AL, UK.
    \texttt{sean.eberhard@warwick.ac.uk}}
    \and
    Vladislav Taranchuk\thanks{Department of Mathematics: Analysis, Logic and Discrete Mathematics, Ghent University, Belgium. \texttt{Vlad.Taranchuk@UGent.be}}
    \and
    Craig Timmons\thanks{Department of Mathematics and Statistics, California State University Sacramento, 6000 J Street, Sacramento, CA 95819.
    \texttt{craig.timmons@csus.edu}}
}\date{\today}
\begin{document}

\maketitle

\begin{abstract}
    For each $t \ge 1$ let $\W_t$ denote the class of graphs other than stars that have diameter $2$ and contain neither a triangle nor a $K_{2,t}$.
    The famous Hoffman--Singleton Theorem implies that $\W_2$ is finite.
    Recently Wood suggested the study of $\W_t$ for $t > 2$ and conjectured that $\W_t$ is finite for all $t \ge 2$.
    In this note we show that (1) $\W_3$ is infinite, (2) $\W_5$ contains infinitely many regular graphs, and (3) $\W_7$ contains infinitely many Cayley graphs.
    Our $\W_3$ and $\W_5$ examples are based on so-called crooked graphs, first constructed by de Caen, Mathon, and Moorhouse.
    Our $\W_7$ examples are Cayley graphs with vertex set $\FF_p^2$ for prime $p \equiv 11 \pmod {12}$.
    \end{abstract}

    \keywords{Crooked graphs, triangle-free graphs, graphs with diameter 2}

\section{Introduction}

Let $G$ be a graph.
The \emph{diameter} of $G$ is the maximum distance between two vertices in $G$.
The \emph{girth} of $G$ is the length of a shortest cycle in $G$, or else $\infty$ if $G$ is acyclic.
The girth of a graph can also be defined in terms of forbidden subgraphs.
Given a family of graphs $\mathcal{F}$, we say $G$ is \emph{$\mathcal{F}$-free} if $G$ has no subgraph isomorphic to a member of $\mathcal{F}$.
Hence, $G$ has girth at least $g$ if and only if $G$ is $\{C_3, C_4, \dots , C_{g-1} \}$-free.

A particularly important family of graphs is the class of diameter-2 graphs of girth 5.
Such a graph $G$ is known as a \emph{Moore graph} (of diameter 2).
The famous Hoffman--Singleton Theorem (see \cite{HS,Singleton}) asserts that $G$ is $d$-regular and of order $d^2+1$ for some $d \in \{2, 3, 7, 57\}$, and in fact the only such graphs are
\begin{enumerate}
    \item the $5$-cycle ($d = 2$),
    \item the Petersen graph ($d = 3$),
    \item the Hoffman--Singleton graph ($d = 7$),
    \item possibly, one or more graphs of degree $d = 57$ and order $3250$.
\end{enumerate}
The existence of a graph of the fourth type has been a mystery for $65$ years.

Attempting to identify a relaxation of the class of Moore graphs that is more suggestive of extremal rather than algebraic combinatorics,
Wood proposed the study of $\{C_3, K_{2,t}\}$-free graphs of diameter $2$ (see \cite{Devillers}).
Let $\W_t$ denote the class of such graphs apart from stars (\emph{Wood graphs}).
Note that $\W_2$ is precisely the class of Moore graphs, so in particular $\W_2$ is finite.
Wood conjectured the following generalization.

\begin{conj}\label{conj:Wood}
    The class $\W_t$ is finite for all $t \ge 2$. In other words, there is an $n_t$ such that if $G$ is a $\{ C_3 , K_{2,t} \}$-free graph of diameter 2 with $n > n_t$ vertices then $G$ is isomorphic to the star graph $K_{n-1,1}$.
\end{conj}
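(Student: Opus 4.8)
The natural strategy is to try to push the linear-algebra argument behind the Hoffman--Singleton Theorem, which settles the case $\W_2$, up to arbitrary $t$. I would work throughout with the adjacency matrix $A$ of a graph $G \in \W_t$ on $n$ vertices. The three defining conditions translate cleanly into statements about $A^2$: since $G$ is triangle-free, the diagonal of $A^2$ records the degrees and $(A^2)_{uv} = 0$ whenever $u \sim v$; since $G$ has diameter $2$, every pair of distinct non-adjacent vertices has a positive number of common neighbours; and since $G$ is $K_{2,t}$-free, no pair has more than $t-1$ common neighbours. Writing $C$ for the codegree matrix, with $C_{uv}$ equal to the number of common neighbours of $u$ and $v$ and $C_{uu} = 0$, these facts say that the off-diagonal support of $C$ is exactly the set of non-edges of $G$ and that each such entry of $C$ lies in $\{1, \dots, t-1\}$.

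First I would try to reduce to the regular case. Double counting via $\mathbf{1}^{\mathsf T} A^2 \mathbf{1}$ shows that a $d$-regular member of $\W_t$ satisfies $n - 1 - d \le d(d-1) \le (t-1)(n-1-d)$, so in particular $d^2 \ge n - 1$ and the degree must grow with the order. If one could additionally show that every sufficiently large Wood graph is regular, or at least nearly regular, then proving the conjecture would reduce to bounding $d$; establishing such a regularity statement is itself a nontrivial step that I would expect to require the diameter-$2$ and $K_{2,t}$-free conditions in tandem.

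The heart of the Hoffman--Singleton argument is that when $t = 2$ every non-edge has codegree exactly $1$, so $C = J - I - A$ in the $d$-regular case and $A$ satisfies the polynomial identity $A^2 + A - (d-1)I = J$. Restricting to the orthogonal complement of the all-ones vector forces every non-principal eigenvalue $\lambda$ to satisfy $\lambda^2 + \lambda - (d-1) = 0$, and integrality of the resulting eigenvalue multiplicities pins $4d - 3$ to a perfect square, leaving only $d \in \{2,3,7,57\}$. The hope for general $t$ would be to extract an analogous Diophantine constraint on $d$, perhaps through eigenvalue interlacing or a rank/trace identity, admitting only finitely many solutions.

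The hard part---and I expect it to be essentially the whole difficulty---is that for $t > 2$ the matrix $C$ is no longer a polynomial in $A$. The codegrees are free to take any values in $\{1, \dots, t-1\}$, so $A$ need satisfy no fixed quadratic and its spectrum is not confined to the roots of a single polynomial; the clean integrality argument simply has nothing to act on. To recover anything one would have to control the slack matrix $J - I - A - C$, for example by bounding its spectral norm, and there is no evident structural reason forcing the codegrees to be even approximately constant. This rigidity failure looks fundamental rather than technical: it is exactly the freedom to realize genuinely non-constant codegrees that one would hope to exploit, and I would therefore not be surprised if the conjecture is in fact false for $t > 2$, with counterexamples living precisely in the gap where the Hoffman--Singleton rigidity breaks down.
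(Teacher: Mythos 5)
The statement you were asked about is a \emph{conjecture}, and the paper does not prove it --- it refutes it for every $t \ge 3$. Your proposal is a candid and largely accurate account of why the Hoffman--Singleton eigenvalue argument does not extend past $t = 2$, and your closing suspicion is exactly right: the conjecture is false for $t \ge 3$. But a correct suspicion is not a proof of anything, and as written your text establishes neither the conjecture nor its negation. The double-counting inequality $n - 1 - d \le d(d-1) \le (t-1)(n-1-d)$ for $d$-regular members of $\W_t$ is fine, and your diagnosis that for $t > 2$ the codegree matrix is no longer forced to equal $J - I - A$, so that $A$ satisfies no fixed quadratic and the integrality argument ``has nothing to act on,'' correctly identifies where the rigidity disappears. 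What is missing is any construction exploiting that freedom.

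The paper supplies the construction. Theorem~\ref{MainThm} produces, for each $e \ge 3$, a graph in $\W_3$ of order $2^{4e-1} + 2^{2e} + 1$: one starts from a crooked graph $G_Q$ on $2q^2$ vertices with $q = 2^{2e-1}$, which is triangle-free, distance-regular of diameter $3$, has exactly two common neighbours for every pair at distance $2$, and whose distance-$3$ relation partitions the vertex set into $2q$ fibres with perfect matchings between distinct fibres; adding one new vertex per fibre joined to that fibre, plus a single apex joined to the $2q$ new vertices, collapses the diameter to $2$ without creating a triangle or a $K_{2,3}$. Since a $K_{2,3}$-free graph is $K_{2,t}$-free for all $t \ge 3$, this disproves the conjecture for every $t \ge 3$; it remains true only at $t = 2$, where $\W_2$ is the class of Moore graphs. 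If you want to push your proposal further, the productive direction is not to repair the spectral argument but to hunt for constructions --- for instance via antipodal distance-regular covers of complete graphs, as the paper does --- precisely in the regime where, as you observe, the codegrees are permitted to be non-constant.
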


This conjecture and particularly the class $\W_3$ were studied by Devillers, Kam\v{c}ev, McKay, \'O Cath\'{a}in, Royle, Van de Voorde, Wanless, and Wood \cite{Devillers}, who found more than five million graphs in $\W_3$ but no infinite family.

Our main contribution is to present an infinite family of graphs in $\W_3$, which shows that shows this conjecture is false for all $t \ge 3$, albeit for a sparse set of $n$'s.

\begin{thm}\label{MainThm}
    For each integer $e \geq 3$ there is a graph $G \in \W_3$ of order $2^{4e-1} + 2^{2e} + 1$.
\end{thm}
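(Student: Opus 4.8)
The plan is to build $G$ from a crooked function and then surgically repair its diameter. Throughout, fix the odd integer $n = 2e-1 \ge 5$ and work over $\FF_{2^n}$; a convenient crooked function is the Gold function $f(x)=x^3$, which is crooked for every odd $n$. The only feature of $f$ I would use is the following consequence of crookedness: for every $a \in \FF_{2^n}\setminus\{0\}$ the derivative map $x \mapsto f(x)+f(x+a)$ is two-to-one onto an affine hyperplane, so that for every such $a$ and every $b \in \FF_{2^n}$ the equation $f(x)+f(x+a)=b$ has either $0$ or $2$ solutions. For $f(x)=x^3$ this is visible at once, since the equation becomes the quadratic $a x^2 + a^2 x + (a^3+b)=0$ with nonzero leading and linear coefficients. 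This $0/2$ dichotomy is exactly the engine that will keep every common-neighbour count below $3$.

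First I would recall the crooked graph $\Gamma = \Gamma_f$ of de Caen, Mathon, and Moorhouse, built on the vertex set $\FF_2 \times \FF_{2^n} \times \FF_{2^n}$, whose adjacency couples a ``slope'' condition of the shape $y+y'=f(x+x')$ with a parity bit chosen precisely so that $\Gamma$ is triangle-free. Two structural facts drive everything. First, by the dichotomy above any two vertices of $\Gamma$ have at most two common neighbours and adjacent vertices have none, so $\Gamma$ is already triangle-free and $K_{2,3}$-free. Second, $\Gamma$ is an antipodal double cover of diameter $3$: each vertex has a unique antipode at distance $3$, and these $2^{2n}$ antipodal pairs are exactly the non-adjacent pairs of $\Gamma$ that have no common neighbour (a common neighbour would give a path of length $2$, contradicting distance $3$). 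Crucially, an antipodal pair shares the same first two coordinates, so it lies inside a single fibre $\{x=\text{const}\}$, and each fibre is an independent set. Thus $\Gamma$ fails only the diameter-$2$ requirement, and it fails it in a highly structured, fibre-localised way.

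The heart of the construction is then to adjoin a set $P$ of $2^{n+1}+1$ new vertices that repairs the diameter while preserving the other two properties. I would index $P$ by $(\FF_2 \times \FF_{2^n}) \cup \{\infty\}$ and join each $p \in P$ to a carefully chosen independent subset of $\Gamma$ — a threaded union drawn from the fibres, governed by the kernel structure of the bilinear form of $f$ — so that every antipodal pair of $\Gamma$ acquires exactly one common neighbour in $P$ (yielding diameter $2$), while the sets assigned to distinct points of $P$ overlap in at most one vertex of $\Gamma$. The resulting graph $G$ then has $2^{2n+1} + 2^{n+1} + 1 = 2^{4e-1} + 2^{2e} + 1$ vertices and is plainly not a star.

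The verification runs by cases according to how many endpoints lie in $P$, and the main obstacle lives in the old--old case. Here a non-adjacent pair has at most two common neighbours in $\Gamma$ together with at most one new common neighbour in $P$, and the delicate point is that these must never combine to three: whenever a distance-$2$ pair already has two common neighbours in $\Gamma$ it must receive none in $P$, while each antipodal pair (with zero in $\Gamma$) must receive exactly one. This is genuinely subtle because one cannot simply join a new vertex to a whole fibre — two non-antipodal vertices of the same fibre may already have two common neighbours in $\Gamma$, so a careless choice immediately creates a $K_{2,3}$. Threading $P$ so as to cover every antipodal pair exactly once without ever handing a distance-$2$ pair a third common neighbour is precisely where the fine structure of the crooked derivative (which affine hyperplane $H_a$ arises, and how it meets the added coset structure) must be matched to the design of $P$, and where the parity of $n$ being odd is used. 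The old--new and new--new cases then reduce to the overlap condition on the neighbourhoods in $P$ and are routine once the design is fixed; together the three cases give triangle-freeness, $K_{2,3}$-freeness, and diameter $2$, completing the proof.
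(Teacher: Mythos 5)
Your high-level plan (take a crooked graph of order $2^{4e-1}$ and adjoin $2^{2e}+1$ new vertices to kill the distance-$3$ pairs) is exactly the paper's strategy, and your vertex count is right. But there is a genuine gap, and it stems from a factual error about the structure of the crooked graph. You describe $\Gamma$ as an antipodal \emph{double} cover in which each vertex has a \emph{unique} antipode, so that each fibre of size $2^n$ contains many non-antipodal pairs which might already have two common neighbours. In fact the de Caen--Mathon--Moorhouse/Bending--Fon-Der-Flaass graph is a $q$-fold antipodal cover of $K_{2q}$ (with $q=2^n$): the fibres of $(a,i,\alpha)\mapsto(a,i)$ are precisely the cliques of the distance-$3$ graph, so \emph{every} pair of distinct vertices in a fibre is at distance $3$ and has no common neighbour, and moreover the edges between any two distinct fibres form a perfect matching. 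The ``genuinely subtle'' obstruction you identify --- that joining a new vertex to a whole fibre could hand a distance-$2$ pair a third common neighbour --- does not exist, because a fibre contains no distance-$2$ pairs.

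Because you believed a delicate ``threading'' was required, you never actually specify the neighbourhoods of the new vertices; the construction is deferred to ``the fine structure of the crooked derivative,'' which is precisely the content that a proof must supply. The paper's design is the simple one your misconception ruled out: one new vertex $v_j$ joined to all of the fibre $I_j$ for each of the $2q$ fibres, plus one apex vertex $v$ joined to all the $v_j$. This also resolves the cases you wave off as routine: two new vertices $v_j, v_{j'}$ meet through $v$; a new vertex $v_j$ and an old vertex $y\in I_{j'}$ meet through the matching edge from $I_{j'}$ into $I_j$; and $v$ reaches every old vertex through the appropriate $v_j$. Your Case analysis cannot be completed as written without first fixing the design of $P$, so as it stands the argument establishes only the order of the intended graph, not its existence.
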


Our proof of Theorem~\ref{MainThm} uses a family of antipodal, distance-regular graphs of diameter $3$ called \emph{crooked graphs},
defined by Bending and Fon-Der-Flaass~\cite{Bending} generalizing previous work of de Caen, Mathon, and Moorhouse~\cite{deCaen}.
We modify these graphs suitably to reduce the diameter to $2$ without creating any triangles or $K_{2,3}$'s.

Notably, the resulting graphs are not regular, and it would be interesting to find an infinite family of regular graphs in $\W_3$.
We have not found such a family, but we can construct an infinite sequence of regular graphs in $\W_5$.
The construction is again based on crooked graphs.

\begin{thm}\label{SecondThm}
    For each integer $e \geq 2$ there is a regular graph $G \in \W_5$ of order $2^{2^{e}-1}$.
\end{thm}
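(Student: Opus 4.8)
The plan is to build the regular $\W_5$ example directly from a crooked graph, exploiting the extra room that $t=5$ (rather than $t=3$) gives us. Recall that a crooked graph is an antipodal distance-regular graph of diameter $3$; its vertex set naturally splits into fibres of size $2$ under the antipodal relation, and the quotient by this antipodal equivalence is a strongly regular graph. The key structural facts I would extract first are: (i) the graph is triangle-free (girth at least $4$ after we understand its cycle structure), (ii) two vertices at distance $2$ have exactly $\mu$ common neighbours for the distance-regular parameter $\mu$, and (iii) two antipodal vertices (at distance $3$) have no common neighbour but their neighbourhoods are matched up in a controlled way. For the crooked graphs of de Caen--Mathon--Moorhouse / Bending--Fon-Der-Flaass the natural order is a power of $2$, which is consistent with the claimed $2^{2^e-1}$.

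The core idea is the same reduction-of-diameter trick used for Theorem~\ref{MainThm}: start from the diameter-$3$ crooked graph and add edges to collapse the two antipodal classes so that every pair of vertices ends up at distance at most $2$, while keeping the graph triangle-free and $K_{2,5}$-free. Concretely, I would first analyse the local structure of a crooked graph to read off the exact number of common neighbours for each of the three distance classes, then choose a rule for adding edges between antipodal-far vertices (most naturally, identifying or joining the fibres via the antipodal map) that (a) makes all distance-$3$ pairs into distance-$2$ pairs, (b) is symmetric enough to preserve vertex-transitivity or at least regularity, and (c) does not introduce a triangle. The crucial quantitative step is to bound, for every pair of vertices $u,v$ in the modified graph, the number of common neighbours $|N(u)\cap N(v)|$ by $4$, since a common-neighbour count of $5$ is exactly what produces a forbidden $K_{2,5}$.

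The main obstacle I anticipate is precisely this $K_{2,5}$-avoidance after the edge additions. In the original crooked graph the parameter $\mu$ (the number of common neighbours of a distance-$2$ pair) is itself fairly large for graphs of order $2^{2^e-1}$, and adding edges to reduce the diameter can only increase common-neighbour counts. So the calculation that controls $|N(u)\cap N(v)|$ for the newly-created adjacencies and for pairs whose distance dropped from $3$ to $2$ is where the whole argument lives, and it is presumably why the threshold is $t=5$ rather than $t=3$: the regularity constraint forces a symmetric (hence less economical) edge-addition rule, which costs a few extra common neighbours compared to the irregular construction of Theorem~\ref{MainThm}. I would therefore spend most of the effort pinning down the crooked-graph intersection numbers and verifying, case by case on the distance classes, that the worst case gives at most $4$ common neighbours.

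Assuming that bound holds, the remaining steps are routine: verify regularity (the added edges should be added uniformly, e.g. a perfect matching or a union of the antipodal pairing with a fixed-point-free involution, so that every vertex gains the same number of new neighbours, keeping the graph regular of some degree $d'$); verify diameter exactly $2$ (every pair is now within distance $2$, and since the graph is not complete it is exactly $2$); and confirm triangle-freeness survives the modification. I would organise the write-up as: recall the crooked-graph definition and its intersection array; state the edge-addition rule; then three short lemmas establishing triangle-freeness, diameter $2$, and the $|N(u)\cap N(v)|\le 4$ bound, from which $G\in\W_5$ and regularity follow immediately.
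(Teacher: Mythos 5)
There is a genuine gap, and it starts with a structural misunderstanding of crooked graphs. The antipodal classes (the fibres $I_1,\dots,I_{2q}$) of a crooked graph of order $2q^2$ have size $q$, not size $2$: each vertex is at distance $3$ from the $q-1$ other vertices of its own fibre, and the fibres are joined pairwise by perfect matchings. Consequently your proposed edge-addition rule --- ``a perfect matching or a union of the antipodal pairing with a fixed-point-free involution'' --- cannot reduce the diameter to $2$. Two vertices in the same fibre have no common neighbour outside that fibre (the matching structure means any outside vertex meets the fibre in exactly one point), so after the modification the induced graph on each fibre must \emph{itself} have diameter $2$; a matching on $q$ vertices is nowhere near enough. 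This is the idea your proposal is missing: the paper embeds into each fibre a copy of a triangle-free, $K_{2,5}$-free, diameter-$2$ graph $H$ of order $q$, and obtains such an $H$ \emph{by induction} (base case $K_{4,4}\in\W_5$ of order $8$). That recursion $q\mapsto 2q^2$ is exactly what produces the double-exponential orders $2^{2^e-1}$ in the statement --- a feature your outline does not account for, since a one-shot modification of a single crooked graph would work at every order $2^{2n+1}$ with $n$ odd.

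Your quantitative instincts about the $K_{2,5}$ bound are in the right place but aimed at the wrong pairs. In a crooked graph $\mu=2$ (not ``fairly large''), and the dangerous count is for two vertices $x\in I_j$, $y\in I_{j'}$ at distance $2$: besides their two common neighbours in $G_Q$, the new within-fibre edges can contribute at most one extra common neighbour in $I_j$ (the unique vertex of $I_j$ matched to $y$) and one in $I_{j'}$, giving at most $4$ --- hence $K_{2,5}$-free but not $K_{2,3}$-free, which is why the regular construction lands in $\W_5$ rather than $\W_3$. Pairs inside a common fibre are controlled because $H$ is $K_{2,5}$-free, and triangle-freeness survives because the inter-fibre edges form matchings and $H$ is triangle-free. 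As written, your proposal would need to be restructured around this inductive embedding to yield a correct proof.
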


Finally, having the view that a family of vertex-transitive graphs would be even better, we give an unrelated construction of an infinite sequence of Cayley graphs in $\W_7$.
These graphs were first described on the first author's blog~\cite{Eberhard} and arose in discussions between him and Padraig \'O Cath\'ain.

\begin{thm}\label{ThirdThm}
    For every prime $p \equiv 11 \pmod {12}$ there is a symmetric subset $A \subset \FF_p^2$ such that the Cayley graph $G = \Cay(\FF_p^2, A) \in \W_7$.
\end{thm}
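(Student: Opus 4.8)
The plan is to take $A$ to be an explicit symmetric algebraic set in $\FF_p^2$ and to translate the three defining properties of $\W_7$ into elementary additive/geometric statements about $A$, with the congruence $p\equiv11\pmod{12}$ supplying exactly the quadratic-residue facts ($-1$ and $-3$ are non-residues, $3$ is a residue, and cubing is a bijection on $\FF_p$) that the argument needs.

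\textbf{Reduction.} For a symmetric set $A=-A\subseteq\FF_p^2\setminus\{0\}$ write $N(d)=|\{(a,a')\in A\times A : a+a'=d\}|$ for the representation function. Since $G=\Cay(\FF_p^2,A)$ is vertex-transitive, one checks directly that: (i) $G$ is triangle-free iff $(A+A)\cap A=\emptyset$, i.e. $N(d)=0$ for all $d\in A$; (ii) $G$ has diameter $2$ iff $A\cup(A+A)\supseteq\FF_p^2\setminus\{0\}$, i.e. $N(d)\ge1$ for every $d\notin A\cup\{0\}$; and (iii) two vertices at difference $d\ne0$ have exactly $N(d)$ common neighbours, so $G$ is $K_{2,7}$-free iff $N(d)\le6$ for all $d\ne0$. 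Conditions (i) and (ii) together are rigid: $A+A$ must \emph{exactly} avoid $A$ while covering all of $\FF_p^2\setminus(A\cup\{0\})$. A counting constraint then pins down the size of $A$: covering forces the number of distinct pairwise sums $\binom{|A|+1}{2}$ to be at least $p^2-O(|A|)$, so $|A|\gtrsim\sqrt2\,p$, while $N(d)\le6$ and $\sum_dN(d)=|A|^2$ force $|A|\lesssim\sqrt6\,p$. Hence $|A|\asymp2p$, which is too many points for one bounded-degree curve, so $A$ should be a union of two symmetric (centrally symmetric) cubics; the expected codegree $|A|^2/p^2\approx4$ sits comfortably below the target $6$.

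\textbf{Triangle-freeness and the codegree bound.} A natural first candidate is a union of two cubic graphs $A=\{(t,t^3+c_1t):t\ne0\}\cup\{(t,t^3+c_2t):t\ne0\}$, which is symmetric since each defining polynomial is odd. Triangle-freeness \emph{within} a single curve is immediate from the identity $t_1^3+t_2^3+t_3^3=3t_1t_2t_3$ valid when $\sum t_i=0$ (using $p\ne3$): a vector relation $\sum(t_i,t_i^3+c t_i)=0$ forces $3t_1t_2t_3=0$, hence a degenerate triangle; ruling out the mixed $(2,1)$-triangles reduces to showing a quantity of the shape $\tfrac{r^2-r+1}{(r+1)^2}$ never equals the relevant parameter, which I would arrange as a non-residue condition guaranteed by $p\equiv11\pmod{12}$. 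For the $K_{2,7}$ bound I would compute $N(d)$ as a sum of intersection numbers of the two cubics and their reflections $d-E_i$; because all four pieces share the same leading cubic form $t^3$ (hence the same point at infinity), eliminating a variable in $a+a'=d$ cancels the leading cubic terms and collapses each pairing to a \emph{quadratic}. This is where the Bézout count loses its behaviour at infinity and the constant $7$ ultimately comes from; sharpening the resulting a priori bound to exactly $N(d)\le6$ for all $d$ is one delicate point, requiring the residue bookkeeping that $p\equiv11\pmod{12}$ makes work out.

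\textbf{The main obstacle: diameter $2$.} The hard part will be establishing $N(d)\ge1$ for \emph{every} $d\notin A\cup\{0\}$. For $d=(0,b)$ this is clean: a mixed sum gives an equation of the form $(1-\lambda)t^3=b$ with nonzero coefficient, solvable for all $b$ precisely because cubing is a bijection on $\FF_p$ (here $p\equiv2\pmod3$ is essential). For $d=(a,b)$ with $a\ne0$, solvability of the same-curve equations is governed by whether discriminants such as $\tfrac{4b-a^3-4c_ia}{3a}$ are quadratic residues, and the mixed sums by the residue behaviour of a related cross-discriminant. The analysis I have sketched shows that the ``reachable region'' of each pairing is a quadratic-residue condition, and a straightforward residue count reveals that for the naive choice of parameters a positive proportion of points remain at distance $>2$. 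The crux is therefore to choose the two curves (and to exploit the reflection option $d-E_i$) so that the union of these residue-conditions \emph{tiles} $\FF_p^2$, which is exactly the rigid partial-difference-set behaviour demanded by (i)--(ii); this is the step for which the full strength of $p\equiv11\pmod{12}$ is needed, so that complementary residue classes line up. I would attack the general case with a Weil/Hasse--Weil estimate, which controls $N(d)$ up to an error of size $O(\sqrt p)$ and so settles all but $O(p^{3/2})$ values of $d$, and then close the gap for the exceptional $d$ by exact cyclotomic/residue computation, with a finite check handling the smallest primes. Finding the precise symmetric set for which both the covering condition and the exact bound $N(d)\le6$ hold simultaneously is the heart of the matter, and I expect it to be the main obstacle.
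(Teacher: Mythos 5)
Your reduction to the representation function $N(d)$ is correct, and your counting heuristic rightly identifies that $A$ should be a union of two symmetric curves of size roughly $2p$ each. But the proposal never actually produces a set $A$ that works: you explicitly concede that for your candidate (two odd cubics $y=t^3+c_it$) ``a positive proportion of points remain at distance $>2$,'' and the step you flag as ``the heart of the matter''---choosing the curves so that the residue conditions tile $\FF_p^2$---is left entirely open. That step is not a technicality; it is the whole theorem. Your fallback via Hasse--Weil also cannot close the gap: since $N(d)\le 6$ is required, the main term of $N(d)$ is $O(1)$, so an estimate with error $O(\sqrt p)$ says nothing about whether $N(d)\ge 1$, for any $d$.

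The paper's construction resolves exactly the obstacle you got stuck on, by taking $A=\{(x,\pm x^2): x\in\FF_p\setminus\{0\}\}$, i.e.\ two parabolas rather than two cubics. The point is the sign: a length-2 path from $(0,0)$ to $(a,b)$ whose first step lies on $y=x^2$ and whose second uses the $-(a-x)^2$ branch (or vice versa) makes the quadratic terms cancel, leaving the \emph{linear} equation $2ax+(\pm b-a^2)=0$, which is solvable for every $b$ whenever $a\ne 0$; the remaining case $a=0$ reduces to $\pm b=2x^2$, exactly one of which is solvable because $-1$ is a nonresidue. Thus covering is automatic, with no tiling of residue classes needed. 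The codegree bound $N(d)\le 6$ then falls out as two quadratics (at most $2$ roots each) plus two linear equations (at most $1$ root each), and triangle-freeness reduces to $X^2-X+1$ having no root, i.e.\ $-3$ a nonresidue. The congruence $p\equiv 11\pmod{12}$ is used only for these two nonresidue facts---not for any alignment of complementary residue classes, and not for $p\equiv 2\pmod 3$ making cubing bijective, which plays no role. In short: right framework, but the construction and its verification---the actual content of the theorem---are missing, and the cubic ansatz you committed to does not appear to be repairable by the methods you propose.
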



\section{Crooked graphs}\label{section 2}

Our proof of Theorem~\ref{MainThm} uses a remarkable family of antipodal distance-regular graphs of diameter $3$ called \emph{crooked graphs} (see~\cite{Bending, deCaen}).
Let $V$ and $W$ be $n$-dimensional vector spaces over $\FF_2$.
A function $Q : V \rightarrow W$ is \emph{crooked} if
\begin{enumerate}
    \item $Q(0) = 0$,
    \item $\sum_{i=1}^4 Q(x_i) \ne 0$ for all distinct $x_1, x_2, x_3, x_4 \in V$ with $x_1 + x_2 + x_3 + x_4 = 0$, and
    \item $\sum_{i=1}^3 (Q(x_i) + Q(x_i + a)) \ne 0$ for all $x_1, x_2, x_3 \in V$ and $a \in V \setminus \{0\}$.
\end{enumerate}
The prototypical example of a crooked function is $Q(x) = x^3$ where $V = W = \FF_{2^n}$ with $n$ odd.
Given a crooked function $Q$, the corresponding \emph{crooked graph} $G_Q$ is the graph $G_Q$ with vertex set $V \times \FF_2 \times W$ where distinct vertices $(a, i, \alpha)$ and $(b, j, \beta)$ are adjacent if and only if
\begin{equation}\label{crooked adjacency}
    \alpha + \beta =  Q(a + b) + (i + j + 1) (Q(a) + Q(b)).
\end{equation}
The following properties were established in \cite{Bending, deCaen} (see particularly \cite[Section~2]{Bending}).%
\footnote{Conversely, Godsil and Roy \cite{Godsil} characterized crooked functions in terms of the distance-regularity of the corresponding graph defined by \eqref{crooked adjacency}.}

\begin{enumerate}
    \item $n$ is odd,
    \item $G_Q$ is distance-regular of order $2q^2$, and degree $2q-1$, and diameter $3$,
    \item $G_Q$ is a triangle-free,
    \item any pair of vertices at distance two have exactly two common neighbors,
    \item $G_Q$ is antipodal, i.e., if $u, v, w \in V(G_Q)$ are distinct vertices and $d(u,v) = d(u, w) = 3$ then $d(v, w) = 3$,
    \item the map $(a, i, \alpha) \mapsto (a, i)$ defines a $q$-fold cover $G_Q \to K_{2q}$ whose fibres $I_1, \dots, I_{2q}$ are the cliques of the distance-$3$ graph and such that the edges between any two distinct fibers form a perfect matching.
\end{enumerate}

\begin{proof}[Proof of Theorem \ref{MainThm}]
    Let $q = 2^{2e-1}$ and let $Q : \FF_q \to \FF_q$ be any crooked function, for example $Q(x) = x^3$.
    Let $G_Q$ be the corresponding crooked graph.
    Define a new graph $G_Q'$ by adding $2q+1$ vertices $v_1, \dots , v_{2q}, v$ to $G_Q$ with the following adjacency rules.
    For $1 \leq j \leq 2q$, the neighborhood of $v_j$ is $I_j \cup \{v\}$.
    The neighborhood of $v$ is $\{v_1, \dots, v_{2q}\}$.
    Then $G_Q'$ has $2q^2 + 2q + 1$ vertices and it contains $G_Q$ as an induced subgraph. We claim that $G'_Q \in \W_3$.

    If $G_Q'$ contains a triangle, then this triangle must contain at least one of the new vertices $v_1, \dots, v_{2q}, v$.
    There is no triangle containing $v$ because the neighborhood of $v$ is the independent set $\{v_1, v_2 , \dots v_{2q} \}$.
    Similarly there is no triangle containing $v_j$ because the neighborhood of $v_j$ is the $I_j \cup \{v\}$, which is also an independent set.
    Therefore $G_Q'$ is triangle-free.

    Next we claim that $G_Q'$ does not contain a $K_{2,3}$.
    Equivalently, any two distinct vertices $x, y \in V(G_Q')$ have at most two common neighbors.
    We may assume $x$ and $y$ are not adjacent, since $G_Q'$ is triangle-free.
    There are a number of cases.
    If $x = v$ and $y \in I_j$ then $x$ and $y$ have a unique common neighbor $v_j$.
    If $x = v_j$ and $y = v_{j'}$ then $x$ and $y$ have a unique common neighbor $v$.
    If $x = v_j$ and $y \in I_{j'}$ then $x$ and $y$ have a unique common neighbor $z \in I_j$, because the edges between $I_j$ and $I_{j'}$ form a perfect matching.
    If $x$ and $y$ lie in a common fiber $I_j$ then they have a unique common neighbor $v_j$.
    Finally, if $x \in I_j$ and $y \in I_{j'}$ with $j \ne j'$ then their common neighbors are exactly their common neighbors in $G_Q$, of which there are exactly two.

    The case analysis of the previous paragraph also shows that every two nonadjacent vertices have at least one common neighbor, so $G_Q'$ has diameter $2$. Thus $G_Q' \in \W_3$.
\end{proof}

We now turn our attention to proving Theorem \ref{SecondThm}, which yields an infinite family of regular graphs $G \in \W_5$.
As in the proof of Theorem \ref{MainThm}, we start with $G_Q$, but instead of adding vertices we add edges within each fiber.


\begin{lem}\label{Lem: Induction}
Let $G_Q$ be a crooked graph on $2q^2$ vertices where
$q = 2^{2e- 1}$ and $e$ is a positive integer.
Let $H \in \W_5$ have order $q$.
If one embeds a copy of $H$ into each fiber of $G_Q$, then the resultant graph is again in $\W_5$.
\end{lem}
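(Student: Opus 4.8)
The plan is to let $G'$ denote the graph obtained from $G_Q$ by embedding a copy of $H$ into each fiber, so that $G'$ has the same vertex set $V(G_Q)$ and its edge set splits cleanly into two kinds: the \emph{intra-fiber} edges, which within each $I_j$ are precisely the edges of the embedded copy of $H$, and the \emph{inter-fiber} edges, which are exactly the edges of $G_Q$ (recall the fibers are independent sets of $G_Q$). This dichotomy is the organizing principle for every verification below. Since the minimum degree of $G'$ is at least $2q-1 \ge 3$, the graph $G'$ is not a star, so it remains to check the three defining properties of $\W_5$: triangle-freeness, diameter $2$, and the absence of a $K_{2,5}$.

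For triangle-freeness I would argue by the location of a putative triangle's vertices. A triangle cannot lie inside a single fiber because $H$ is triangle-free; it cannot use two vertices of one fiber $I_j$ and one vertex of another fiber $I_{j'}$, since the $G_Q$-edges between $I_j$ and $I_{j'}$ form a perfect matching and so the outside vertex meets $I_j$ in at most one vertex; and it cannot spread across three distinct fibers because $G_Q$ itself is triangle-free. For diameter $2$, take non-adjacent vertices $x,y$. If $x,y$ lie in a common fiber then they are non-adjacent in the embedded copy of $H$, which has diameter $2$, so they have a common neighbor inside that fiber. If $x,y$ lie in different fibers, then since the distance-$3$ pairs of $G_Q$ are exactly the intra-fiber pairs (the fibers being the cliques of the distance-$3$ graph), the pair $x,y$ is at distance $2$ in $G_Q$ and hence already has a common neighbor there.

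The main content, and the step I expect to be the crux, is ruling out $K_{2,5}$, i.e.\ showing that any two distinct vertices have at most four common neighbors; since $G'$ is triangle-free it suffices to treat a non-adjacent pair $x,y$. If $x,y$ lie in a common fiber $I_j$, then any common neighbor inside $I_j$ is a common neighbor in the copy of $H$, of which there are at most four because $H \in \W_5$; and a common neighbor outside $I_j$ is impossible, since a vertex of another fiber meets $I_j$ in exactly one matched vertex and so cannot be adjacent to both $x$ and $y$. If instead $x \in I_j$ and $y \in I_{j'}$ with $j \ne j'$, I would classify each common neighbor $z$ according to whether each of the edges $zx$ and $zy$ is intra- or inter-fiber. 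Both edges being intra-fiber is impossible, as that would force $z \in I_j \cap I_{j'} = \varnothing$. If both are inter-fiber then $z$ is a common neighbor of $x$ and $y$ in $G_Q$, and there are exactly two of these because $x,y$ are at distance $2$ in $G_Q$. In each of the two mixed cases the intra-fiber edge pins $z$ to the fiber of one endpoint while the inter-fiber edge, being part of a perfect matching, identifies $z$ uniquely as the matching partner of the other endpoint, contributing at most one vertex each. These mixed contributions lie in $I_j$ and $I_{j'}$ respectively, while the two $G_Q$-common neighbors lie in neither fiber, so the three families are disjoint and we obtain at most $2 + 1 + 1 = 4$ common neighbors.

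The delicate point throughout is the interplay between the two edge types: the perfect-matching property of the inter-fiber edges is what both kills the mixed triangles and caps the mixed common neighbors at one apiece, while the ``exactly two common neighbors at distance two'' property of $G_Q$ supplies the remaining two, the sum $4$ being exactly what is needed to land in $\W_5$ (and explaining why the threshold here is $5$ rather than $3$). Finally, to feed the lemma into the induction proving Theorem~\ref{SecondThm}, I would record that if the embedded copy of $H$ is $r$-regular then $G'$ is $(2q-1+r)$-regular, since each vertex keeps its $2q-1$ inter-fiber neighbors and gains exactly $r$ intra-fiber neighbors.
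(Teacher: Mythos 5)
Your proof is correct, and the triangle-freeness and diameter-$2$ arguments coincide with the paper's. Where you diverge is the $K_{2,5}$ step: the paper assumes a $K_{2,5}$ with classes $\{r_1,r_2\}$ and $\{\ell_1,\dots,\ell_5\}$ exists and argues about where its seven vertices can sit --- the perfect matchings force $r_1,r_2$ into distinct fibers and likewise the $\ell_k$, so at least three of the $\ell_k$ land outside the fibers of $r_1,r_2$ and produce a $K_{2,3}$ in $G_Q$, a contradiction. You instead fix a non-adjacent pair $x,y$ and count its common neighbors directly, classifying each by whether the edges to $x$ and to $y$ are intra- or inter-fiber, obtaining the bound $2+1+1=4$ (and $\le 4$ inside a single fiber). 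Both arguments rest on exactly the same two structural inputs --- the perfect matchings between fibers and the fact that distance-$2$ pairs in $G_Q$ have exactly two common neighbors --- but your version is slightly sharper, since it shows every non-adjacent pair has at most four common neighbors with an explicit breakdown, whereas the paper's placement argument only rules out five; the paper's version is a little quicker to check because it never needs the disjointness of the three families of common neighbors. Your closing remarks on $G'$ not being a star and on regularity are correct and harmless additions (the paper handles regularity in the proof of Theorem~\ref{SecondThm} rather than in the lemma).
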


\begin{proof}
    Let $G_Q''$ be a graph resulting from embedding a copy of $H$ into each fiber of $G_Q$.
    For each index $j$ let $H_j \cong H$ denote the subgraph of $G_Q''$ induced by $I_j$.
    Note that since no edges were added between any two distinct fibers, the edges between them still form a perfect matching in $G_Q''$.
    Also, $G_Q ''$ has $G_Q$ as a subgraph.

    Since $H_j$ has diameter $2$, any two vertices in the same fiber $H_j$ are at distance at most $2$.
    Any two vertices in distinct fibers are at distance at most 2 in $G_Q$, and so also have distance at most 2 in $G_Q''$.
    This shows $G_Q''$ has diameter 2.

    There is no triangle in $G_Q''$ with all three vertices contained in the same fiber $H_j$, because $H_j$ is triangle-free.
    Similarly, there is no triangle in $G_Q''$ with vertices in distinct fibers, because $G_Q$ is triangle-free and no edges were added between fibers.
    Finally, there is no triangle in $G_Q''$ with one vertex in a fiber $H_i$ and the other two vertices in another fiber $H_j$, because the edges between $H_i$ and $H_j$ form a perfect matching.
    Thus $G_Q''$ is triangle-free.

    Now suppose that $G_Q''$ contains a $K_{2, 5}$ with vertex classes $\{r_1, r_2\}$ and $\{\ell_1,\ell_2,\ell_3,\ell_4,\ell_5\}$.
    We argue similarly as above.
    It cannot be that all $7$ vertices are contained in a single fiber $H_j$, because $H_j$ is $K_{2,5}$-free.
    If $r_1, r_2 \in H_i$ for some $i$, then some $\ell_k$ must be contained in a different fiber $H_j$, but this is impossible because the edges between $H_i$ and $H_j$ form a perfect matching.
    Thus $r_1$ and $r_2$ must be in distinct fibers, say $H_1$ and $H_2$.
    Similarly, $\ell_1, \dots, \ell_5$ must be in distinct fibers.
    Therefore at least $3$ of the vertices $\ell_i$ are in fibers distinct from $H_1$ and $H_2$, say $\ell_3 \in H_3, \ell_4 \in H_4, \ell_5 \in H_5$ without loss of generality.
    But this is impossible because $G_Q$ is $K_{2,3}$-free.
    Thus $G_Q''$ is $K_{2, 5}$-free, which finishes the proof that $G_Q'' \in \W_5$.
\end{proof}

\begin{proof}[Proof of Theorem \ref{SecondThm}]
    We use induction on $e$.
    The base case $e = 2$ is established by observing that $K_{4,4} \in \W_5$.
    Now suppose inductively that $H \in \W_5$ is a regular graph of order $q = 2^{2^e-1}$.
    Let $Q : \FF_q \to \FF_q$ be a crooked function and let $G_Q$ be the corresponding crooked graph of order $2q^2 = 2^{2^{e+1}-1}$.
    Embed copies of $H$ into the fibers of $G_Q$, obtaining $G_Q''$,
    which is again a regular graph.
    By Lemma~\ref{Lem: Induction}, $G_Q''$ is again in $\W_5$.
    This completes the induction.
    \end{proof}

\section{Cayley graphs}

Finally we prove Theorem~\ref{ThirdThm}.

\begin{proof}[Proof of Theorem~\ref{ThirdThm}]
    Let $p \equiv 11 \pmod {12}$ be a prime.
    By quadratic reciprocity, this condition ensures that $-1$ and $-3$ are quadratic nonresidues in $\FF_p$. Let $V$ be the group $\FF_p^2$ where the operation is  component-wise addition.  Define $A \subset V$ by
    \[
        A = \{(x, \pm x^2) : x \in \FF_p \backslash \{ 0 \} \}.
    \]
    Let $G = \Cay(V, A)$.
    By definition, distinct vertices $(x_1, y_1), (x_2, y_2) \in V$ are adjacent if and only if
    \[
        y_1 - y_2 = \pm (x_1 - x_2)^2 \ne 0.
    \]
    We claim that $G$ is $\{C_3, K_{2,7}\}$-free and has diameter $2$.
    Since $G$ is vertex-transitive, it is enough to show that the vertex $(0,0)$ is not in a triangle, and that for any nonzero $(a,b) \in \FF_p^2 \setminus A$, there is at least one but at most six paths of length 2 from $(0,0)$ to $(a,b)$.

    The neighborhood of $(0,0)$ is the set $A$.  Suppose a pair of distinct vertices in $A$ are adjacent.  This implies that there exists $x,y \in \FF_p \backslash \{ 0 \}$ such that one of the following equations holds: $x^2 + y^2 = (x-y)^2$, $x^2 + y^2 = - (x-y)^2$, $x^2 - y^2 = (x- y)^2$, or $x^2 - y^2 = - ( x-y )^2$.
    This first, third, and fourth cannot occur since $x$ and $y$ are distinct and not zero.  The second implies $x^2 - xy + y^2 = 0$ which implies $x/y$ is a solution to the quadratic equation $X^2 - X + 1 = 0$.  This discriminant of this quadratic is $-3$ which is a quadratic non-residue in $\FF_p$.  We conclude that there can be no such $x$ and $y$, so the neighborhood of $(0,0)$ is an independent set.  By vertex-transitivity, $G$ is $C_3$-free.

    We now complete the proof of Theorem \ref{ThirdThm} by showing that for any nonzero $(a,b) \in \FF_p^2 \setminus A$, the number of paths of length 2 from $(0,0)$ to $(a,b)$ is at least one and at most six.  Fix such a vertex $(a,b) \notin A \cup \{(0,0)\}$.  Suppose that $(x,y)$ is the middle vertex on a path of length 2 from $(0,0)$ to $(a,b)$.  Since $(x,y)$ is a neighbor of $(0,0)$, we have that $y = x^2$ or $y = -x^2$.  Since $(x,y)$ is adjacent to $(a,b)$, we have that
    $b - y = (a-x)^2$ or $b- y  = - (a-x)^2$.  Routine calculations show that these four possibilities lead to the four equations $2x^2 - 2ax + (a^2 - b) = 0$, $2x^2 - 2ax + (-a^2 - b) = 0$, $2ax + (b - a^2) = 0$, and
    $2ax + ( -b - a^2 ) = 0$.  The first two equations are quadratics in $x$ and so each has at most two solutions.

    \smallskip
    \noindent
    \textit{Case 1:} If $a  \neq 0$, then the latter two equations are linear in $x$ and so each has a unique solution.
    Moreover, the solution is not $x = 0$ because $(a, b) \notin A$.
    This gives at least one and at most six paths of length 2 from $(0,0)$ to $(a,b)$.

    \smallskip
    \noindent
    \textit{Case 2:} If $a = 0$, then the latter two equations are impossible, otherwise we get $a=b=0$, but $(a,b) \neq (0,0)$.  The quadratic equations are now $b = 2x^2$ and $-b = 2x^2$.  Since $-1$ is a quadratic non-residue, exactly one of these equations has two solutions while the other has none.  Thus, in Case 2 we have exactly two paths of length 2 from $(0,0)$ to $(a,b)$.

    \smallskip
    Once again using the fact that $G$ is vertex-transitive, we can say that $G$ is $K_{2,7}$-free and has diameter 2, so $G \in \W_7$.
\end{proof}

\section{Further remarks}\label{Conclusion}

Crooked graphs were first named by Bending and Fon-Der-Flaass~\cite{Bending} after abstracting the key properties of the crooked functions $Q(x) = x^{2^k+1}$ considered by de Caen, Mathon, and Moorhouse~\cite{deCaen}.
There are now many known crooked functions.
It follows from \cite[Proposition~11]{Bending} that any quadratic almost perfect nonlinear (APN) permutation function is crooked.
In 2008, Budaghyan, Carlet and Leander~\cite{Bud} discovered the first new infinite family of quadratic APN permutations inequivalent to power APN functions.
More recently, Li and Kaleyski \cite{Kaleyski} constructed another infinite class of quadratic APN permutations that appear to be inequivalent to those in previously known classes.

Crooked graphs have found myriad applications both in spectral graph theory~\cite{Jurisic, Koolen, Levit} and extremal graph theory~\cite{deCaen2, Ihringer}.
The following further application was noted to the second author by Sam Mattheus.
Allen, Keevash, Sudakov, and Verstra\"{e}te \cite{Allen} studied Tur\'{a}n problems on forbidding odd cycles together with at least one bipartite graph.  Using an algebraic construction, they proved that $\textup{ex}(n , \{C_3 , K_{2,3} \} ) \geq \frac{1}{ \sqrt{3} }n^{3/2} - n$ whenever $n$ is of the form $n = 3q^2$ where $q$ is a sufficiently large enough prime congruent to 2 modulo 3.  This implies, due to the density of primes, the lower bound $\textup{ex}(n, \{C_3, K_{2,3} \} ) \geq  \left( \frac{1}{ \sqrt{3} }  + o(1) \right)n^{3/2} $.
If $n$ has the special form $2^{4e-1}$ then an improvement follows from considering a crooked graph $G_Q$ with $q = 2^{2e-1}$.
In particular, for infinitely many $n$,
\[
\left( \frac{1}{ \sqrt{2} } + o(1) \right) n^{3/2}
\leq
\textup{ex}(n , \{C_3 , K_{2,3} \} )
\leq
\textup{ex}(n ,  K_{2,3}  )
\leq
\left( \frac{1}{ \sqrt{2} } + o(1) \right) n^{3/2}.
\]
The upper bound holds for all $n$ because of the K\"{o}vari--S\'{o}s--Tur\'{a}n Theorem.



\begin{thebibliography}{99}

\bibitem{Allen}
P.\ Allen, P.\ Keevash, B.\ Sudakov, J.\ Verstra\"{e}te,
Tur\'{a}n numbers of bipartite graphs plus an odd cycle,
\emph{J. Combin.\ Theory Ser.\ B}, 106 (2014) 134--162.

\bibitem{Bending}
T.\ Bending and D.\ Fon-Der-Flaass, Crooked functions, bent functions, and distance regular graphs, {\em Electron. J. Combin.} {\bf 5} (R34) 1998.

\bibitem{Bud}
L.\ Budaghyan, C.\ Carlet and G.\ Leander, Two classes of quadratic APN binomials inequivalent to power functions, {\em IEEE Trans. Inf. Theory}, 54(9),
(2008), 4218--4229.




\bibitem{Devillers}
A.\ Devillers, N.\ Kam\v{c}ev, B.\ McKay,
P.\ \'{O}. Cath\'{a}in, G.\ Royle, G.\ Van de Voorde, I.\ Wanless, and D.\ Wood,
Triangle-free graphs with diameter 2,
{\em Matrix Annals}, MATRIX Book Series, Vol.\ 6 Springer, 2025.

\bibitem{deCaen2}
D.\ de Caen,
 Large equiangular sets of lines in euclidean space,
 \emph{Electron.\ J.\ Combin.}, {\bf 7} (R55), 2000.

\bibitem{deCaen}
D.\ de Caen, R.\ Mathon, G.\ E.\ Moorhouse,
A family of antipodal distance-regular graphs related to the classical Preparata codes,
{\em Algebr.\ Combin.},
{\bf 4} (1995) 317--327.

\bibitem{Eberhard}
S.\ Eberhard, An infinite sequence of $K_{2,7}$-free graphs of diameter 2, Sean Eberhard's mathematics blog, published June 2024.


\bibitem{Godsil}
C.\ Godsil, A.\ Roy,
Two characterizations of crooked functions, IEEE Trans.\ Inform.\ Theory, Vol.\ 54, no.\ 2, 864--866.


\bibitem{HS}
A.\ J.\ Hoffman, R.\ R.\ Singleton,
On Moore graphs with diameters $2$ and $3$,
\emph{IBM J. Res. Develop.} 4 (1960), 497--504.

\bibitem{Ihringer}
F.\ Ihringer,
Ratio bound (Lov\'{a}sz number) versus inertia bound, preprint arXiv:2312.09524v2, 2023.

\bibitem{Jurisic}
A.\ Juri\v{s}i\'{c}, J.\ Koolen, A.\ Žitnik,
Triangle-free distance-regular graphs with an eigenvalue multiplicity equal to their valency and diameter 3,
\emph{European J.\ Combin.}, Volume 29, Issue 1,
2008, 193--207.

\bibitem{Koolen}
J.\ Koolen, J.\ Park, H.\ Yu,
An inequality involving the second largest and smallest eigenvalue of a distance-regular graph,
\emph{Linear Algebra Appl.},
Volume 434, Issue 12,
2011, 2404--2412.

\bibitem{Levit}
M.\ Levit,
Graph coverings with few
eigenvalues or no short cycles,
PhD thesis, University of Waterloo; Ontario, Canada, May 2023.

\bibitem{Kaleyski}
K.\ Li and N.\ Kaleyski, Two New Infinite Families of APN Functions in Trivariate Form.
{\em IEEE Trans. Inf. Theory}, 70(2), (2024), 1436--1452.


\bibitem{Singleton}
R.\ R.\ Singleton,
There is no irregular Moore graph,
\emph{Amer.~Math.~Monthly} 75 (1968), 42--43.

\end{thebibliography}
\end{document}